\documentclass[10pt,reqno]{amsart}
\usepackage[T1]{fontenc}
\usepackage[utf8]{inputenc}
\usepackage{lmodern}
\usepackage[a4paper,top=0.82in,bottom=0.84in,left=0.92in,right=0.92in]{geometry}
\usepackage{amsmath,amssymb,amsfonts,amsthm,mathtools}
\usepackage{enumitem,microtype,xcolor}
\usepackage[numbers,sort&compress]{natbib}
\usepackage{hyperref}
\allowdisplaybreaks[2]
\hypersetup{
 bookmarksdepth=2,
 colorlinks=true,
 linkcolor=blue!45!black,
 citecolor=blue!45!black,
 urlcolor=blue!45!black,
 pdftitle={Gaussian Exterior Dilation and Schatten Classes of Levy Area},
 pdfauthor={Guangqian Zhao},
 pdfsubject={Gaussian exterior forms, pathwise determinant comparison, and Schatten class membership of Levy area},
 pdfkeywords={Gaussian exterior forms, Levy area, regularized determinants, Schatten classes, covariance entropy}
}
\theoremstyle{plain}
\newtheorem{theorem}{Theorem}
\newtheorem{corollary}{Corollary}
\theoremstyle{remark}
\newtheorem*{remark}{Remark}
\newcommand{\E}{\mathbb E}
\newcommand{\Pbb}{\mathbb P}
\newcommand{\Sp}{\mathcal S}
\newcommand{\tr}{\operatorname{tr}}
\newcommand{\Det}{\det{}_2}
\begin{document}
\title[Gaussian Exterior Dilation and L\'evy Area]{Gaussian Exterior Dilation and Schatten Classes of L\'evy Area}
\author{Guangqian Zhao}
\address{School of Mathematical Sciences, University of Science and Technology of China, Hefei, Anhui 230026, China}
\email{zhaoguangqian@mail.ustc.edu.cn}
\date{}
\subjclass[2020]{Primary 60G15, 47B10; Secondary 60H05, 15A75}
\keywords{Gaussian exterior forms; L\'evy area; regularized determinants; Schatten classes; covariance entropy}
\begin{abstract}
We study the skew-adjoint Hilbert--Schmidt operator obtained by
evaluating a fixed Hilbert two-form on a centered Gaussian linear map whose
covariance is the inner product on its domain multiplied by a
fixed nonnegative trace-class operator. We compute the expected regularized determinant of the resulting
random operator and compare the random determinant with this
expectation by dilation of the scalar parameter. For each fixed covariance and
two-form, a single dilation almost surely applies to every
nonnegative parameter value and yields simultaneous bounds for
all Schatten norms and quasi-norms of orders $0<p<2$.
The least admissible dilation satisfies a universal weak-$L^1$
estimate with optimal tail exponent. For Wiener area, a rank-one
perturbation identity identifies the mean determinant with the
regularized determinant of an explicit deterministic skew-adjoint
operator and reduces Schatten membership to conditions on the
covariance eigenvalues.
Consequently, for each $0<p<1$, the area belongs almost surely to the
Schatten class of order $p$ if and only if the covariance of the Wiener
process does. At $p=1$, the criterion is finite covariance entropy;
for $p>1$, trace-class covariance suffices.
\end{abstract}
\maketitle

\section{Introduction}

L\'evy area is the antisymmetric part of a second-order stochastic
integral. It enters the construction of rough paths and the
analysis of differential equations driven by irregular signals
\cite{Lyons1998,FrizHairer2020}. For a Hilbert-space Wiener process
with trace-class covariance, the area has a Hilbert--Schmidt
realization. We determine when it belongs to the trace class and
to the Schatten ideals below trace class, in terms of the
covariance eigenvalues.

The main result concerns a fixed Hilbert two-form evaluated by a
centered Gaussian linear map whose covariance is the inner product on its
domain multiplied by a fixed nonnegative trace-class operator. The resulting
skew-adjoint operator has a regularized determinant whose mean is
given by an exterior-power expansion. We prove that a single
random dilation compares this determinant with its mean for every
nonnegative value of the scalar parameter. The least dilation
satisfies a universal weak-$L^1$ estimate, and the tail exponent is
optimal. For each fixed covariance and two-form, the comparison
holds on one event of probability one and gives simultaneous
upper and lower bounds for all Schatten norms and quasi-norms
of orders strictly between zero and two.
Here the pathwise comparison is simultaneous in the determinant
parameter for a fixed Gaussian image; the Wiener-area application
concerns the area over a fixed time interval.

For Wiener area, the mean determinant is the regularized
determinant of an explicit deterministic skew-adjoint operator.
A rank-one perturbation identity determines its determinant
function and the exterior coefficients of the Brownian time
kernel. It also reduces Schatten membership to scalar
summability conditions. For every order $0<p<1$, the area belongs
almost surely to the Schatten class of order $p$ if and only if
the covariance belongs to the same class. For a nonzero
covariance with positive eigenvalues $(q_i)$, set
$S=\sum_iq_i$. The trace-class criterion is
\[
 \sum_iq_i\log(S/q_i)<\infty.
\]
Thus the endpoint requires a logarithmic summability condition
in addition to the trace-class assumption on the covariance.

The conclusion for Schatten orders greater than one follows
from the exactness criterion for tensor norms of Ledoux, Lyons and Qian
\cite{LedouxLyonsQian2002}; the corresponding estimate is
recorded after the proof of Corollary~\ref{cor:area}.
Dereich \cite{Dereich2010} develops finite-dimensional
approximation for Banach-space Wiener rough paths.
Grong, Nilssen and Schmeding
\cite{GrongNilssenSchmeding2022} study geometric approximation
in infinite-dimensional spaces for a fixed Schatten tensor norm.

For general continuous Hilbert-space semimartingales, the author
\cite{Zhao2026WeakTrace} proves almost-sure joint continuity of
stochastic cross integrals in the two time variables with values
in the little weak-trace ideal. This ideal consists of compact
operators whose singular values, multiplied by their indices,
tend to zero. That paper also constructs Wiener examples for
which this convergence can be arbitrarily slow. The covariance
criteria proved here use Gaussian structure to characterize
membership in the trace class and in Schatten ideals of order below one.

Distributional formulas for stochastic area originate in
L\'evy's work \cite{Levy1951}. Higher-dimensional formulas
and double Wiener--It\^o representations are developed in
\cite{HelmesSchwane1983,Kree1986,FerreiroUtzet2011}.
Dela Cruz and Oberhauser \cite{DelaCruzOberhauser2026}
determine finite-dimensional singular-value distributions and
high-dimensional asymptotics for matrices of Brownian L\'evy
areas. Their Section~2.4 also treats the singular-value densities
of area matrices for sufficiently regular centered Gaussian processes with
independent identically distributed coordinates. The present paper
gives almost-sure Schatten membership criteria for a fixed
Hilbert-space Wiener process with arbitrary trace-class spatial covariance.

The proof of the Gaussian comparison uses the covariance geometry
of exterior contractions. Alternation bounds the largest covariance
eigenvalue of a Gaussian contraction by its covariance trace divided
by the exterior degree. For degree at least two, Gaussian integration
by parts converts this relation into an inverse-norm estimate.
Successive contractions, followed by a one-dimensional Gaussian
density bound, give a normalized small-ball estimate whose constant
grows linearly with the degree. Applying this estimate and Markov's
inequality at geometric thresholds makes the failure probabilities
summable over the determinant coefficients. The resulting simultaneous
coefficient bounds yield a single dilation for the whole determinant.
Gaussian conditioning also underlies Barvinok's estimates for
determinant estimators of mixed discriminants
\cite[Sections~3--4]{Barvinok1999}. The estimate used here exploits
alternation to control all exterior degrees in the same comparison.

The deterministic operator in the Wiener-area application is
the skew-adjoint part of the upper triangular truncation of a
rank-one positive operator. It therefore belongs to the classical
triangular-projection framework
\cite{KwapienPelczynski1970}; the relevant weak-type estimate
is recalled in \cite[Section~8]{AleksandrovPeller2022}.
The associated rank-one perturbation has the classical triangular
form recalled in \cite[Section~2.3]{ArlinskiiTsekanovskii2006}.
We use a direct determinant calculation for this model to identify
the mean determinant and obtain the covariance summability criteria.

\paragraph{Notation and preliminaries.}
Throughout, Hilbert spaces are real and separable. Their inner products
and norms are denoted by $\langle\cdot,\cdot\rangle$ and $\|\cdot\|$.
For a bounded operator $T$, let $T^*$ denote its adjoint and
$\|T\|_{\mathrm{op}}$ its operator norm; $I$ denotes the identity on the
space in question, and $\tr$ denotes the trace. We use $\otimes$ for the
Hilbert tensor product and identify $x\otimes y$ with the rank-one
operator $h\mapsto x\langle y,h\rangle$.
For a compact operator $T$ between Hilbert spaces, let
$s_1(T)\ge s_2(T)\ge\cdots\ge0$ be its singular values, counted with
multiplicity and padded with zeros if necessary. For $p>0$, set
\[
 \|T\|_p^p=\sum_{j\ge1}s_j(T)^p\in[0,\infty],\qquad
 \Sp_p=\{T:\|T\|_p<\infty\}.
\]
For $0<p<1$, the quantity $\|T\|_p$ is a quasi-norm; for $p\ge1$, it is
a norm on $\Sp_p$. The cases $p=1$ and $p=2$ are the trace-class and
Hilbert--Schmidt ideals, respectively. Probability and expectation are
denoted by $\Pbb$ and $\E$, and $\mathbf1_D$ denotes the indicator of a set $D$.

Determinants of real operators are taken after complexification.
For trace-class $T$, let $\det(I+T)$ denote the Fredholm determinant.
For Hilbert--Schmidt $T$, define the regularized Fredholm determinant by
\[
 \Det(I+T):=\det\bigl((I+T)e^{-T}\bigr).
\]
The operator $(I+T)e^{-T}-I$ is trace class, so the right-hand side is defined.
We use the standard continuity and product properties of these
determinants \cite{Simon2005}.

For a Hilbert space $E$ and an integer $r\ge0$, let $\Lambda^rE$ denote the Hilbert completion of its algebraic exterior power, normalized so that increasing wedges of an orthonormal basis are orthonormal; set $\Lambda^0E=\mathbb R$. For nonnegative integers $r,s$, exterior multiplication extends to a
continuous bilinear map $\Lambda^rE\times\Lambda^sE\to\Lambda^{r+s}E$, and
\[
 \|u\wedge v\|\le\sqrt{\binom{r+s}{r}}\,\|u\|\,\|v\|,
 \qquad u\in\Lambda^rE,\quad v\in\Lambda^sE.
\]
This estimate follows by orthogonal antisymmetrization in $E^{\otimes(r+s)}$.
For a bounded operator $R:E\to F$ between Hilbert spaces, its induced
exterior power $\Lambda^rR:\Lambda^rE\to\Lambda^rF$ is determined by
\[
 (\Lambda^rR)(x_1\wedge\cdots\wedge x_r)
 =Rx_1\wedge\cdots\wedge Rx_r.
\]
In degree zero, $\Lambda^0R$ is the identity on $\mathbb R$.
We write $u^{\wedge k}$ for the $k$-fold exterior product of $u$, with
$u^{\wedge0}=1$. In degree two, let $u\mapsto\widehat u$ denote the
isometric alternating embedding $\Lambda^2E\hookrightarrow E\otimes E$
determined by
$\widehat{x\wedge y}=(x\otimes y-y\otimes x)/\sqrt2$.
The assignment
$x\wedge y\mapsto x\otimes y-y\otimes x$
extends linearly and continuously to an isomorphism from
$\Lambda^2E$ onto the skew-adjoint Hilbert--Schmidt operators on $E$.
For such an operator $T$, let $\omega_T$ denote the corresponding
two-form. Then $\|T\|_2^2=2\|\omega_T\|^2$. If the nonzero eigenvalue pairs of $T$ are $\pm i\sigma_j$, with $\sigma_j>0$, define its determinant function, with scalar parameter $\lambda$, by
\begin{equation}\label{eq:det}
 Z_T(\lambda):=\Det(I+\lambda T)
 =\sum_{k\ge0}\left\|\frac{\omega_T^{\wedge k}}{k!}\right\|^2\lambda^{2k}
 =\prod_j(1+\lambda^2\sigma_j^2),\qquad \lambda\ge0.
\end{equation}
Indeed, for a suitable orthonormal family $(e_j)$, the orthogonal block representation is $\omega_T=\sum_j\sigma_j e_{2j-1}\wedge e_{2j}$, and each exterior coefficient is the corresponding elementary symmetric function of $(\sigma_j^2)$. The series and product define the same entire function, also denoted by $Z_T$, of the complex parameter.

\section{Pathwise dilation for Gaussian exterior forms}

Let $(\Omega,\mathcal F,\Pbb)$ be a probability space. Let $H,K$ be Hilbert spaces, and let $Q\ge0$ be trace class on $H$. Let $G:K\to L^2(\Omega;H)$ be linear, with the scalar family
$\{\langle Gf,h\rangle:f\in K,\ h\in H\}$ centered and jointly Gaussian. Assume
\begin{equation}\label{eq:covariance}
 \E[Gf\otimes Gg]=\langle f,g\rangle Q,\qquad f,g\in K.
\end{equation}
Gaussian fourth moments imply
$Gf\wedge Gg\in L^2(\Omega;\Lambda^2H)$ for $f,g\in K$.
The alternating bilinear map $(f,g)\mapsto Gf\wedge Gg$ therefore
induces a linear map from the algebraic exterior square of $K$
into $L^2(\Omega;\Lambda^2H)$, which we call Gaussian evaluation.
No samplewise bounded operator $G:K\to H$ is assumed.
Write $(q_i)$ for the finite or countable sequence of positive eigenvalues of $Q$, counted with multiplicity. For integers $r\ge1$, set
\[
 S=\tr Q=\sum_iq_i,\qquad
 e_r(q)=\sum_{i_1<\cdots<i_r}q_{i_1}\cdots q_{i_r}
       =\tr(\Lambda^rQ),\qquad e_0(q)=1.
\]
An empty sum is zero. These quantities are finite and satisfy $r!e_r(q)\le S^r$.

\begin{theorem}[Pathwise dilation]\label{thm:main}
Gaussian evaluation has a unique continuous linear extension
from $\Lambda^2K$ to $L^2(\Omega;\Lambda^2H)$.
For each fixed $\eta\in\Lambda^2K$, let $\omega_A$ be its image
and let $A$ be the associated skew-adjoint operator. Then
$A\in L^2(\Omega;\Sp_2)$, and
\begin{equation}\label{eq:mean}
 P(\lambda):=\E Z_A(\lambda)
 =\sum_{k\ge0}(2k)!e_{2k}(q)
       \left\|\frac{\eta^{\wedge k}}{k!}\right\|^2\lambda^{2k},\qquad \lambda\ge0.
\end{equation}
The series on the right defines an entire function. With $\inf\varnothing=\infty$, define the least pathwise dilation by
\begin{equation}\label{eq:dilation}
 L_*:=\inf\{\ell\ge1:P(\lambda/\ell)\le Z_A(\lambda)\le P(\ell \lambda)
                       \text{ for every }\lambda\ge0\}.
\end{equation}
Then $L_*$ is a measurable $[1,\infty]$-valued random variable and satisfies
\begin{equation}\label{eq:weakone}
 \Pbb(L_*>\ell)\le\frac9\ell,\qquad \ell\ge1,
\end{equation}
with a constant independent of $Q$ and $\eta$.
For $0<p<2$, define the deterministic quantity $N_p\in[0,\infty]$ by
\[
 N_p^p=\frac{2p\sin(\pi p/2)}\pi
       \int_0^\infty\log P(\lambda)\,\frac{d\lambda}{\lambda^{p+1}}.
\]
There is an event $\Omega_0\in\mathcal F$ with $\Pbb(\Omega_0)=1$
such that, on $\Omega_0$, $1\le L_*<\infty$ and both inequalities in
\eqref{eq:dilation} hold with $\ell=L_*$. On the same event,
\begin{equation}\label{eq:scale}
 L_*^{-1}N_p\le\|A\|_p\le L_*N_p
 \qquad\text{for every }0<p<2,
\end{equation}
with infinite values allowed, and $A\in\Sp_p$ for every $p\ge2$.
\end{theorem}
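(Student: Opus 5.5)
The plan is to realize $G$ concretely, compute everything coefficientwise in the exterior expansion \eqref{eq:det}, and reduce the pathwise comparison to simultaneous two-sided bounds on the coefficients of $Z_A$.

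\emph{Extension and mean.} Fix an orthonormal basis $(f_j)$ of $K$ and set $X_j=Gf_j$. By \eqref{eq:covariance} the $X_j$ are i.i.d.\ centered Gaussian vectors in $H$ with covariance $Q$. For $j<k$, $\E\|X_j\wedge X_k\|^2=S^2-\tr Q^2=2e_2(q)$. The products $X_j\wedge X_k$ for distinct pairs are orthogonal in $L^2(\Omega;\Lambda^2H)$, because each is multilinear in independent centered variables. Hence evaluation is an isometry up to the factor $\sqrt{2e_2(q)}$ on finite combinations of the $f_j\wedge f_k$, and it extends uniquely. Moreover $\E\|A\|_2^2=4e_2(q)\|\eta\|^2$, so $A\in L^2(\Omega;\Sp_2)$ and in particular $A\in\Sp_2\subset\Sp_p$ for all $p\ge2$ almost surely.

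For \eqref{eq:mean}, first take $\eta$ in the span of finitely many $f_j$, so that $\omega_A^{\wedge k}=\Lambda^{2k}X(\eta^{\wedge k})$ with $X f_j=X_j$. Expand $\eta^{\wedge k}/k!$ in the basis $f_{J}$, indexed by increasing $2k$-tuples $J$. Distinct $J$ give orthogonal random variables $X_{j_1}\wedge\cdots\wedge X_{j_{2k}}$, again by multilinearity in independent centered Gaussians. Each satisfies
\[
\E\|X_{j_1}\wedge\cdots\wedge X_{j_{2k}}\|^2=\E\det(\langle X_{j_a},X_{j_b}\rangle)=(2k)!\,e_{2k}(q).
\]
This gives the coefficient $c_k:=\|\omega_A^{\wedge k}/k!\|^2$ with mean $m_k=(2k)!e_{2k}(q)\|\eta^{\wedge k}/k!\|^2$. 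Both sides are continuous in $\eta$; for the left side use $L^2$-continuity and Fatou/monotonicity of the coefficients. So the formula passes to general $\eta$, and summing over $k$ gives \eqref{eq:mean}. The series is entire because $m_k\le S^{2k}\|\eta\|^{2k}/k!$, using $(2k)!e_{2k}\le S^{2k}$ and $\|\eta^{\wedge k}/k!\|^2\le\|\eta\|^{2k}/k!$.

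\emph{Coefficient bounds and the weak-$L^1$ tail.} It suffices to show that, outside an event of probability at most $9/\ell$, we have $\ell^{-2k}m_k\le c_k\le\ell^{2k}m_k$ for all $k\ge1$. Substituting these bounds into the power series gives $P(\lambda/\ell)\le Z_A(\lambda)\le P(\ell\lambda)$ for all $\lambda\ge0$.

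The upper bounds follow from Markov's inequality: the failure probability is at most $\sum_{k\ge1}\ell^{-2k}\le 2/\ell$ for $\ell\ge2$. The case $\ell<2$ is trivial, since then $9/\ell>1$.

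The lower bounds need a small-ball estimate of the form
\[
\Pbb\bigl(\|\Lambda^{2k}X u\|^2\le\varepsilon\,\E\|\Lambda^{2k}Xu\|^2\bigr)\le Ck\sqrt\varepsilon,\qquad u\in\Lambda^{2k}K .
\]
Taking $\varepsilon=\ell^{-2k}$ then gives failure probability at most $\sum_k Ck\ell^{-k}=O(1/\ell)$ for $\ell\ge2$, and careful constants should give $9$. This small-ball estimate is the main obstacle. I would prove it by peeling off one Gaussian direction at a time, as announced in the introduction.

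Write $\Lambda^{r}X u$ as a contraction against a single Gaussian factor. Conditionally on the remaining factors, it is a Gaussian vector whose covariance has largest eigenvalue at most $\frac1r$ times its trace; this is where alternation enters. For $r\ge2$, Gaussian integration by parts converts this into a bound on $\E[\text{trace}/\|\cdot\|^2]$-type inverse moments. Iterating down to degree one, and finishing with the one-dimensional density bound $\Pbb(|g|\le t)\le t\sqrt{2/\pi}$, gives a constant that accumulates linearly in $k$.

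\emph{Measurability, $\Omega_0$, and Schatten norms.} Since $P$ and $Z_A$ are continuous and monotone in $\lambda$, the defining condition in \eqref{eq:dilation} can be tested on rational $\lambda$. The admissible set of $\ell$ is upward closed and closed, so the infimum is attained when finite, and $L_*$ is measurable. Letting $\ell\to\infty$ in \eqref{eq:weakone} gives $L_*<\infty$ almost surely; $\Omega_0$ is this event intersected with $\{A\in\Sp_2\}$.

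For the Schatten bounds, the singular values of $A$ are the $\sigma_j$, each taken twice. The elementary integral
\[
\int_0^\infty\log(1+\lambda^2\sigma^2)\,\lambda^{-p-1}\,d\lambda=\frac{\pi\sigma^p}{p\sin(\pi p/2)}
\]
therefore yields
\[
\|A\|_p^p=\frac{2p\sin(\pi p/2)}\pi\int_0^\infty\log Z_A(\lambda)\,\frac{d\lambda}{\lambda^{p+1}} .
\]
Inserting $P(\lambda/L_*)\le Z_A(\lambda)\le P(L_*\lambda)$ and changing variables gives $L_*^{-p}N_p^p\le\|A\|_p^p\le L_*^pN_p^p$, which is \eqref{eq:scale}.
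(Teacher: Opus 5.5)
Most of your outline is sound. Your treatment of the extension and of the mean formula uses the independent columns $X_j=Gf_j$ and $\E\det(\langle X_{j_a},X_{j_b}\rangle)=(2k)!e_{2k}(q)$, and this is a legitimate alternative to the paper's contraction identity. For general $\eta$, pass to the limit through the isometric extension of $u\mapsto\Lambda^{2k}Xu$, because Fatou alone only gives $\E c_k\le m_k$. The Markov upper bounds, the measurability argument and the Schatten integral agree with the paper.

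The genuine gap is the lower-tail small-ball estimate, which carries the theorem. You propose to prove it for the vector $\Lambda^{2k}Xu\in\Lambda^{2k}H$ by conditioning on ``the remaining factors''. You assert that the conditional law is Gaussian with top covariance eigenvalue at most $1/r$ of its trace. On the $H$ side this is false. Take $H=K=\mathbb R^2$, $Q=I$, $u=f_1\wedge f_2$. Then $\Lambda^2Xu=\det(X)\,e_1\wedge e_2$, and conditioning on either a row or a column of $X$ leaves a one-dimensional Gaussian, so the ratio is $1$, not $2$. In general, conditioning on all columns except $X_j$ leaves $X_j\wedge w$ plus a shift. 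Its covariance is $M_wQM_w^*$ with $M_wx=x\wedge w$, and the trace-to-norm ratio of this covariance is not bounded below by the degree. More basically, all factors of $\Lambda^rX$ are the same Gaussian map, so peeling at the level of this vector has no conditional independence to iterate on.

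The alternation ratio lives on the $K$ side. For $\xi$ isonormal over $K$ and $v\in\Lambda^mK$, the contraction $\iota_\xi v$ has covariance $T_vT_v^*$ with trace $m\|v\|^2$ and norm at most $\|v\|^2$. Using this requires two ingredients that are absent from your plan.

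\emph{First ingredient: coordinates.} Write $Gf=\sum_i\sqrt{q_i}\,\xi_i(f)e_i$ with independent isonormal rows $\xi_i$. With $u_k=\eta^{\wedge k}/k!$, prove that $\langle\omega_A^{\wedge k}/k!,e_I\rangle=\sqrt{q_I}\,\iota_{\xi_{i_{2k}}}\cdots\iota_{\xi_{i_1}}u_k$ simultaneously for all increasing $I$. For fixed $I$ the rows are distinct, so the successive contractions use independent processes. Peel using the inverse \emph{first} moment $\E\|\iota_\xi v\|^{-1}\le\sqrt m/((m-1)\|v\|)$, and finish with the one-dimensional density bound. This gives $\Pbb(U_I\le x)\le\sqrt{2/\pi}\,2k\sqrt x$ for the normalized squared coordinate $U_I=|\iota_{\xi_{i_{2k}}}\cdots\iota_{\xi_{i_1}}u_k|^2/((2k)!\|u_k\|^2)$. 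Inverse second moments, which your ``trace$/\|\cdot\|^2$'' suggests, are infinite at $m=2$, so they cannot be used here.

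\emph{Second ingredient: aggregation.} The coordinates are dependent, so they must be combined without independence. One has $c_k/m_k=\sum_Iw_IU_I$ with $w_I=q_I/e_{2k}(q)$. The inequality $\mathbf 1_{\{c_k\le x m_k\}}\le2\sum_Iw_I\mathbf 1_{\{U_I\le2x\}}$ then yields $\Pbb(c_k\le x m_k)\le8k\sqrt x/\sqrt\pi$ without any independence among the $U_I$. This is the bound that produces the constant $9$.
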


\begin{proof}
\emph{Step 1. Gaussian evaluation and its mean determinant.}
Let $H_Q=\overline{\operatorname{ran}Q}$. Equation \eqref{eq:covariance} implies that $Gf\in L^2(\Omega;H_Q)$ and $\E\|Gf\|^2=S\|f\|^2$. Choose an orthonormal eigenbasis $(e_i)$ of $H_Q$ with $Qe_i=q_ie_i$, and put $\xi_i(f)=q_i^{-1/2}\langle Gf,e_i\rangle$. With $\delta_{ij}$ denoting the Kronecker delta, we have
\[
 \E[\xi_i(f)\xi_j(g)]=\delta_{ij}\langle f,g\rangle.
\]
Joint Gaussianity implies
that the $\xi_i$ are independent isonormal Gaussian processes over $K$:
each is a centered Gaussian linear map from $K$ to $L^2(\Omega)$ with
covariance $\langle f,g\rangle$. Moreover,
$Gf=\sum_i\sqrt{q_i}\,\xi_i(f)e_i$ in $L^2(\Omega;H)$.
For background on isonormal processes, see \cite{Janson1997}.

Let $\xi$ be an isonormal Gaussian process over $K$.
For an integer $m\ge1$ and $f\in K$, let
$\iota_f:\Lambda^mK\to\Lambda^{m-1}K$ be the adjoint of left exterior
multiplication $w\mapsto f\wedge w$. For $v\in\Lambda^mK$ and an
orthonormal basis $(f_a)$ of $K$, define
\begin{equation}\label{eq:contraction}
 \iota_\xi v=\sum_a\xi(f_a)\iota_{f_a}v
 \quad\text{in }L^2(\Omega;\Lambda^{m-1}K),\qquad
 \E\|\iota_\xi v\|^2=m\|v\|^2.
\end{equation}
To check convergence, expand $v$ in an orthonormal wedge basis. Each squared coefficient occurs exactly $m$ times in $\sum_a\|\iota_{f_a}v\|^2$. The series is basis independent by the isonormal isometry applied to the Hilbert--Schmidt map $f\mapsto\iota_fv$. If $W\in L^2(\Omega;\Lambda^mK)$ is measurable with respect to a
$\sigma$-field $\mathcal G\subseteq\mathcal F$ independent of $\xi$, the same partial sums
define $\iota_\xi W$ in $L^2(\Omega;\Lambda^{m-1}K)$ and satisfy
\[
 \E\bigl[\|\iota_\xi W\|^2\mid\mathcal G\bigr]=m\|W\|^2.
\]
Conditionally on $\mathcal G$, $\iota_\xi W$ is centered Gaussian
on $\Lambda^{m-1}K$ with covariance
$\sum_a(\iota_{f_a}W)\otimes(\iota_{f_a}W)$.
This series converges in trace norm almost surely and has
trace $m\|W\|^2$.
For an integer $r\ge1$, distinct indices $i_1,\ldots,i_r$ from
the chosen covariance eigenbasis, and $u\in\Lambda^rK$, define
the successive contractions by this conditional construction.
At step $1\le j\le r$, the input has degree $r-j+1$ and is
measurable with respect to the processes used in the preceding
steps, hence independent of $\xi_{i_j}$.
Iterating the conditional second-moment identity gives
\begin{equation}\label{eq:scalarsecond}
 \E\left|\iota_{\xi_{i_r}}\cdots\iota_{\xi_{i_1}}u\right|^2
 =r!\|u\|^2.
\end{equation}

Define the Gaussian evaluation of $\eta$ by
\[
 \omega_A:=\sum_{i<j}\sqrt{q_iq_j}\,
       \bigl(\iota_{\xi_j}\iota_{\xi_i}\eta\bigr)e_i\wedge e_j
 \quad\text{in }L^2(\Omega;\Lambda^2H_Q).
\]
The convergence follows from \eqref{eq:scalarsecond} with $r=2$ and
the finiteness of $e_2(q)$. Regard $\omega_A$ as an element of
$L^2(\Omega;\Lambda^2H)$, and let $A$ be its associated skew-adjoint
operator. The same identity gives
\begin{equation}\label{eq:secondA}
 \E\|\omega_A\|^2=2e_2(q)\|\eta\|^2,\qquad
 \E\|A\|_2^2=4e_2(q)\|\eta\|^2.
\end{equation}
Each coordinate depends linearly on $\eta$, so \eqref{eq:secondA}
also applies to differences of inputs. On the algebraic exterior square,
expansion of the coordinates identifies this map with the substitution
$f\wedge g\mapsto Gf\wedge Gg$. Density therefore proves that the
construction is the unique continuous extension of Gaussian evaluation.
In particular, it is independent of the chosen eigenbasis.

For each integer $k\ge0$, set $u_k=\eta^{\wedge k}/k!$.
For $k\ge1$ and an increasing multi-index $I=(i_1<\cdots<i_{2k})$, write
$e_I=e_{i_1}\wedge\cdots\wedge e_{i_{2k}}$ and
$q_I=\prod_{i\in I}q_i$. We claim that
\begin{equation}\label{eq:powers}
 \left\langle\frac{\omega_A^{\wedge k}}{k!},e_I\right\rangle
 =\sqrt{q_I}\,\iota_{\xi_{i_{2k}}}\cdots\iota_{\xi_{i_1}}u_k
\end{equation}
almost surely, simultaneously for all such pairs $(k,I)$.
For $k=1$, this is the definition of $\omega_A$.
If $\eta$ is supported on a finite-dimensional subspace $E\subset K$,
choose representatives of the images under $G$ of a basis of $E$.
Outside a single null set, these vectors define a linear map
$E\to H$. Its induced maps on exterior powers preserve exterior
products, and determinant expansion gives \eqref{eq:powers}
for every $k\ge1$ and every increasing multi-index $I$ of length
$2k$. If $2k>\dim E$, both sides vanish.

For general $\eta$, choose finite-rank orthogonal projections $P_N$
increasing strongly to $I_K$. For every fixed integer $r\ge1$,
convergence on simple wedges and the bound
$\|\Lambda^rP_N\|_{\mathrm{op}}\le1$ imply, by density, that
$\Lambda^rP_N$ converges strongly to the identity on $\Lambda^rK$.
Set $\eta_N=(\Lambda^2P_N)\eta$ and denote its Gaussian image by
$\omega_{A_N}$. Equation \eqref{eq:secondA} gives
\[
 \E\|\omega_{A_N}-\omega_A\|^2
 =2e_2(q)\|\eta_N-\eta\|^2\longrightarrow0.
\]
For each fixed $k$, continuity of exterior multiplication gives
$\eta_N^{\wedge k}/k!\to u_k$ in $\Lambda^{2k}K$.
Thus the right-hand side of \eqref{eq:powers}, with $\eta$ replaced by
$\eta_N$, converges in $L^2$ by \eqref{eq:scalarsecond}.
For fixed $I$, the left-hand side is a polynomial in the finitely many
coordinates of $\omega_{A_N}$ indexed by pairs of elements of $I$.
It therefore converges in probability to the corresponding coordinate
of $\omega_A^{\wedge k}/k!$. Uniqueness of limits in probability proves
\eqref{eq:powers} almost surely for this pair $(k,I)$.
Since there are only countably many such pairs, the identities hold
on a single event of probability one for the fixed input $\eta$.

Squaring and summing \eqref{eq:powers}, then applying Tonelli, yields
\[
 \E\left\|\frac{\omega_A^{\wedge k}}{k!}\right\|^2
 =(2k)!e_{2k}(q)\|u_k\|^2.
\]
Equation \eqref{eq:det} and another application of Tonelli prove
\eqref{eq:mean}. Since $(2k)!e_{2k}(q)\le S^{2k}$, the product
representation in \eqref{eq:det}, applied to the skew-adjoint operator
associated with $\eta$, gives, for $z\in\mathbb C$,
\[
 \sum_{k\ge0}(2k)!e_{2k}(q)\|u_k\|^2|z|^{2k}
 \le\sum_{k\ge0}\|u_k\|^2(S|z|)^{2k}
 \le\exp(S^2|z|^2\|\eta\|^2).
\]
Here the last inequality uses $\log(1+x)\le x$.
The series therefore converges uniformly on compact subsets of
$\mathbb C$ and defines an entire function.

\emph{Step 2. A small-ball estimate for the exterior coefficients.}
Fix $0\ne v\in\Lambda^mK$, where $m\ge2$, and put
$V=\iota_\xi v$. The operator $T_v:K\to\Lambda^{m-1}K$ defined by
$T_vf=\iota_fv$ satisfies
\[
 \|T_v\|_{\mathrm{op}}\le\|v\|,
 \qquad \|T_v\|_2^2=m\|v\|^2.
\]
For $f\ne0$, complete $f/\|f\|$ to an orthonormal basis of $K$.
Contraction removes this basis vector from each wedge in which it
occurs, giving $\|\iota_fv\|\le\|f\|\|v\|$.
The Hilbert--Schmidt identity follows from \eqref{eq:contraction}. The covariance of $V$ is $C=T_vT_v^*$, so
\begin{equation}\label{eq:covgeometry}
 \tr C=m\|v\|^2,\qquad 0\le C\le\|v\|^2I.
\end{equation}
Thus the exterior degree bounds from below the ratio of the
covariance trace to its operator norm. This relation gives the
inverse-norm estimate below.

First suppose that $K$ is finite dimensional. For $\varepsilon>0$, set
$R_\varepsilon=(\|V\|^2+\varepsilon^2)^{1/2}$. Gaussian integration by
parts, applied to the smooth vector field
$x/(\|x\|^2+\varepsilon^2)^{1/2}$, gives
\begin{align*}
 \E\frac{\|V\|^2}{R_\varepsilon}
 &=\E\left[
       \frac{\tr C}{R_\varepsilon}
       -\frac{\langle V,CV\rangle}{R_\varepsilon^3}
      \right]\\
 &\ge(m-1)\|v\|^2\E R_\varepsilon^{-1}.
\end{align*}
Here we used \eqref{eq:covgeometry} and
$\|V\|^2\le R_\varepsilon^2$. The left-hand side is at most
$\E\|V\|\le\sqrt m\,\|v\|$. Monotone convergence as
$\varepsilon\downarrow0$ therefore proves
\begin{equation}\label{eq:inverse}
 \E\|\iota_\xi v\|^{-1}
 \le\frac{\sqrt m}{(m-1)\|v\|}.
\end{equation}
We interpret the reciprocal of zero as $+\infty$.
The calculation also applies to singular covariance, since integration
by parts may be carried out in the underlying independent Gaussian
coordinates.

For general $K$, let $v_N=(\Lambda^mP_N)v$, with the finite-rank
projections used above. Then $v_N\to v$ in $\Lambda^mK$, and
\eqref{eq:contraction} gives
$\iota_\xi v_N\to\iota_\xi v$ in $L^2$.
For all sufficiently large $N$, $v_N\ne0$. The finite-dimensional
estimate applies to $v_N$ and the restriction of $\xi$ to $P_NK$.
Along an almost surely
convergent subsequence, Fatou's lemma and $\|v_N\|\to\|v\|$ give
\eqref{eq:inverse}. In particular, $\iota_\xi v\ne0$ almost surely.

Fix an integer $r\ge2$ and a nonzero vector $u\in\Lambda^rK$.
On an auxiliary probability space, let $\zeta_1,\ldots,\zeta_r$ be
independent isonormal Gaussian processes over $K$, and put
\[
 V_0=u,\qquad V_j=\iota_{\zeta_j}V_{j-1}\quad(1\le j\le r).
\]
Let $\mathcal F_0$ be the trivial $\sigma$-field and let
$\mathcal F_j$ be generated by $\zeta_1,\ldots,\zeta_j$.
Each $V_j$ is $\mathcal F_j$-measurable and square integrable.
We prove inductively that
$\E\|V_j\|^{-1}<\infty$ for $0\le j\le r-1$.
This is immediate for $V_0=u$.
Suppose that $1\le j\le r-1$ and $\E\|V_{j-1}\|^{-1}<\infty$.
Then $V_{j-1}\ne0$ almost surely and has exterior degree $r-j+1\ge2$.
Conditional on $\mathcal F_{j-1}$, we may therefore apply
\eqref{eq:inverse} with $m=r-j+1$, since $\zeta_j$ is independent
of $\mathcal F_{j-1}$. This gives
\[
 \E\bigl[\|V_j\|^{-1}\mid\mathcal F_{j-1}\bigr]
 \le
 \frac{\sqrt{r-j+1}}{r-j}\,\|V_{j-1}\|^{-1},
 \qquad 1\le j\le r-1.
\]
Taking expectations proves the induction step. Iterating the estimates
yields
\[
 \E\|V_{r-1}\|^{-1}
 \le\frac1{\|u\|}
       \prod_{m=2}^r\frac{\sqrt m}{m-1}
 =\frac{\sqrt{r!}}{(r-1)!\|u\|}.
\]
In particular, $V_{r-1}\ne0$ almost surely.
For the final contraction, set $F=V_r$.
Conditional on $\mathcal F_{r-1}$, $F$ is a centered real Gaussian
random variable with variance $\|V_{r-1}\|^2>0$.
Its conditional density gives
\[
 \Pbb\{|F|\le a\mid\mathcal F_{r-1}\}
 \le\sqrt{\frac2\pi}\,\frac a{\|V_{r-1}\|},
 \qquad a\ge0.
\]
Repeated application of \eqref{eq:contraction} gives
$\E|F|^2=r!\|u\|^2$.
Combining the conditional density bound with the preceding
inverse-moment estimate and setting
$U=|F|^2/(r!\|u\|^2)$, we obtain
\begin{equation}\label{eq:scalarsmall}
 \E U=1,\qquad
 \Pbb(U\le x)\le\sqrt{\frac2\pi}\,r\sqrt x,
 \qquad x\ge0.
\end{equation}

We next apply \eqref{eq:scalarsmall} to the coefficients of the determinant.
For a fixed $k\ge1$ with positive mean coefficient in
\eqref{eq:mean}, both $e_{2k}(q)$ and $\|u_k\|$ are positive. Define
\[
 U_I=
 \frac{|\iota_{\xi_{i_{2k}}}\cdots
             \iota_{\xi_{i_1}}u_k|^2}
      {(2k)!\|u_k\|^2},
 \qquad
 w_I=\frac{q_I}{e_{2k}(q)}.
\]
For each $I$, \eqref{eq:scalarsmall} applies to $U_I$ with $r=2k$
and $u=u_k$, since $\xi_{i_1},\ldots,\xi_{i_{2k}}$ are independent
isonormal processes. The weights are deterministic, nonnegative, and
sum to one. By \eqref{eq:powers}, the normalized coefficient is
\[
 Y_k:=\frac{\|\omega_A^{\wedge k}/k!\|^2}
              {(2k)!e_{2k}(q)\|u_k\|^2}
     =\sum_Iw_IU_I.
\]
For $x>0$, the event $Y_k\le x$ implies that the indices with
$U_I\le2x$ carry at least half of the total weight. Hence
\[
 \mathbf1_{\{Y_k\le x\}}
 \le2\sum_Iw_I\mathbf1_{\{U_I\le2x\}}.
\]
Taking expectations in the preceding inequality and using
Tonelli gives
\[
 \Pbb(Y_k\le x)
 \le2\sum_Iw_I\Pbb(U_I\le2x).
\]
Apply \eqref{eq:scalarsmall} with $r=2k$ and use
$\sum_Iw_I=1$. Also,
$\E Y_k=\sum_Iw_I\E U_I=1$. Consequently,
\begin{equation}\label{eq:gaussian}
 \E Y_k=1,\qquad
 \Pbb(Y_k\le x)\le\frac{8k}{\sqrt\pi}\sqrt x,
 \qquad x\ge0.
\end{equation}
The bound at $x=0$ follows by continuity of probability from above.
No independence between the variables $U_I$ is required.
For every $\ell>1$, the bounds in \eqref{eq:gaussian} at the
thresholds $x=\ell^{-2k}$ are summable over $k$. This is the
lower-tail estimate used to compare all determinant coefficients
by one dilation.

\emph{Step 3. The least admissible dilation.}
Choose a measurable representative of $A$ that is skew-adjoint
and Hilbert--Schmidt at every outcome, setting it equal to zero
on a null exceptional set. For each realization, let
$\mathcal A$ be the set of admissible dilations in
\eqref{eq:dilation}. Since $P$ is nondecreasing, $\mathcal A$ is
upward closed. Continuity of $P$ and $Z_A$ implies that $\mathcal A$
is closed in $[1,\infty)$ and that admissibility can be tested at
nonnegative rational values of $\lambda$.
Consequently, for every finite $\ell\ge1$,
\[
 \{L_*\le\ell\}
 =\bigcap_{\lambda\in\mathbb Q_{\ge0}}
   \{P(\lambda/\ell)\le Z_A(\lambda)\le P(\ell \lambda)\}.
\]
For each fixed $\lambda\ge0$, the function
$T\mapsto\Det(I+\lambda T)$ is continuous on $\Sp_2$.
Every event in this countable
intersection is therefore measurable, which proves the measurability
of $L_*$. If $\mathcal A$ is nonempty, its
infimum belongs to $\mathcal A$; hence $L_*$ is admissible
whenever it is finite.

Let $\mathcal J$ be the set of indices $k\ge1$ for which the coefficient of $\lambda^{2k}$ in $P$ is positive. By nonnegativity and countability, all coefficients with zero
mean vanish on a common event of probability one.
On that event, the bounds
\[
 \ell^{-2k}\le Y_k\le\ell^{2k},
 \qquad k\in\mathcal J,
\]
imply $P(\lambda/\ell)\le Z_A(\lambda)\le P(\ell \lambda)$ for every $\lambda\ge0$,
by summing the nonnegative power series term by term. Thus, for
$\ell>1$, Markov's inequality and \eqref{eq:gaussian} yield
\begin{align}
 \Pbb(L_*>\ell)
 &\le\sum_{k\in\mathcal J}
   \left[
    \Pbb(Y_k<\ell^{-2k})+\Pbb(Y_k>\ell^{2k})
   \right]\notag\\
 &\le\sum_{k\ge1}
   \left(\ell^{-2k}+\frac{8k}{\sqrt\pi}\ell^{-k}\right)\notag\\
 &=\frac1{\ell^2-1}
   +\frac8{\sqrt\pi}\frac{\ell}{(\ell-1)^2}.
 \label{eq:dilationtail}
\end{align}

For $\ell\ge4$, the right side multiplied by $\ell$ is at most
$4/15+128/(9\sqrt\pi)<9$. For $1\le\ell<4$, use probability at most one. This proves \eqref{eq:weakone}, including almost sure finiteness. If $\mathcal J$ is empty, then $A=0$ and $P=Z_A=1$ almost surely, so $L_*=1$.

\emph{Step 4. Simultaneous Schatten comparison.}
For a skew-adjoint $T\in\Sp_2$, we first verify the integral representation
\begin{equation}\label{eq:integral}
 \|T\|_p^p=\frac{2p\sin(\pi p/2)}\pi
       \int_0^\infty\log Z_T(\lambda)\,\frac{d\lambda}{\lambda^{p+1}},
 \qquad 0<p<2.
\end{equation}
Let $\{\pm i\sigma_j\}$ be the nonzero eigenvalue pairs of $T$.
For $\lambda\ge0$,
\[
 \log Z_T(\lambda)=\sum_j\log(1+\lambda^2\sigma_j^2),
 \qquad
 0\le\log Z_T(\lambda)\le
 \lambda^2\sum_j\sigma_j^2=\frac{\lambda^2}{2}\|T\|_2^2.
\]
All summands are nonnegative. For $s>0$ and $0<p<2$,
integration by parts after the substitution $u=\lambda s$ gives
\[
 \int_0^\infty\log(1+\lambda^2s^2)\,\frac{d\lambda}{\lambda^{p+1}}
 =\frac{2s^p}{p}\int_0^\infty
          \frac{u^{1-p}}{1+u^2}\,du
 =\frac{\pi s^p}{p\sin(\pi p/2)}.
\]
The boundary terms vanish because $p<2$ at zero and $p>0$
at infinity; for $s=0$, both sides are zero. Tonelli's theorem
and $\|T\|_p^p=2\sum_j\sigma_j^p$ now prove
\eqref{eq:integral}, with both sides allowed to be infinite.

Let $\Omega_0$ be the intersection of the probability-one events
obtained in Steps~1 and~3. On this event, $A\in\Sp_2$, $L_*<\infty$,
and the inequalities in \eqref{eq:dilation} hold with $\ell=L_*$
for every $\lambda\ge0$. Fix a realization in $\Omega_0$. Then
\[
 \log P(\lambda/L_*)\le\log Z_A(\lambda)\le\log P(L_*\lambda)
 \qquad(\lambda\ge0).
\]
For any $p\in(0,2)$, integrate these nonnegative functions
against $\lambda^{-p-1}\,d\lambda$. The changes of variables
$\lambda=L_*u$ in the lower integral and $\lambda=u/L_*$
in the upper integral, together with \eqref{eq:integral},
give
\[
 L_*^{-p}N_p^p\le\|A\|_p^p\le L_*^pN_p^p.
\]
Taking $p$-th roots proves \eqref{eq:scale}. These operations
remain valid for infinite integrals because $1\le L_*<\infty$
on the fixed realization. Since $p$ was arbitrary and the event was
chosen independently of $p$, the comparison holds
simultaneously for every $0<p<2$. On the same event,
$A\in\Sp_p$ for every $p\ge2$ by $\Sp_2\subset\Sp_p$.

\end{proof}

\begin{remark}[Sharpness of the tail exponent]
The power $\ell^{-1}$ in \eqref{eq:weakone} is optimal. Take
$H=K=\mathbb R^2$, let $f_1,f_2$ be the standard orthonormal basis
of $K$, and set $Q=I$ and $\eta=f_1\wedge f_2$. Let $\Delta$ be the
determinant of a $2\times2$ matrix with independent standard normal entries.
Then
\[
 Z_A(\lambda)=1+\Delta^2\lambda^2,\qquad P(\lambda)=1+2\lambda^2,\qquad
 L_*=\max\{|\Delta|/\sqrt2,\sqrt2/|\Delta|\}.
\]
An orthogonal change of the four Gaussian coordinates expresses
$\Delta$ as
\[
 \Delta=\frac{g_1^2+g_2^2}{2}-\frac{g_3^2+g_4^2}{2},
\]
where $g_1,\ldots,g_4$ are independent standard Gaussian variables.
The two terms are independent exponential variables with mean one,
so $\Delta$ has density $e^{-|x|}/2$ on $\mathbb R$.
For $\ell\ge1$, the events $|\Delta|<\sqrt2/\ell$ and
$|\Delta|>\sqrt2\,\ell$ are disjoint. Consequently,
\[
 \Pbb(L_*>\ell)=1-e^{-\sqrt2/\ell}+e^{-\sqrt2\,\ell},
 \qquad \ell\ge1.
\]
In particular, $\lim_{\ell\to\infty}\ell\Pbb(L_*>\ell)=\sqrt2$
and $\E L_*=\infty$. The tail exponent in \eqref{eq:weakone}
is therefore optimal; the numerical constant $9$ is not claimed to be optimal.
\end{remark}

\begin{remark}[Comparison at large parameters]
Suppose that $P$ has infinitely many nonzero coefficients. Then every coefficient of even degree is positive: vanishing of $e_{2k}(q)$ or $\eta^{\wedge k}$ forces all subsequent coefficients to vanish. Thus $Y_k$ is defined for every $k\ge1$. For every
$\ell>1$, Markov's inequality and \eqref{eq:gaussian} give
\[
 \sum_{k\ge1}
 \Pbb\{Y_k^{1/(2k)}\notin[\ell^{-1},\ell]\}<\infty.
\]
Apply the first Borel--Cantelli lemma to a countable sequence
$\ell\downarrow1$. On one event of probability one,
\[
 Y_k^{1/(2k)}\longrightarrow1.
\]
Intersect this event with the event of Theorem~\ref{thm:main}.
For each resulting realization, fix any $\ell>1$ and choose
$1<\ell_0<\ell$. There is an integer $k_0\ge1$ such that
$\ell_0^{-2k}\le Y_k\le\ell_0^{2k}$ for every $k\ge k_0$.
Consequently, each of
\[
 P(\ell \lambda)-Z_A(\lambda),\qquad Z_A(\lambda)-P(\lambda/\ell)
\]
has nonnegative coefficients in degrees at least $2k_0$, and its
coefficient of degree $2k_0$ is strictly positive. Its remaining terms
form a polynomial of degree less than $2k_0$. Thus both differences
are nonnegative for all sufficiently large $\lambda$.

The conclusion holds on the same event for every real $\ell>1$;
only the threshold in $\lambda$ depends on the realization and on $\ell$.
The infimum of the eventually admissible dilations is therefore one.
The value one is eventually admissible if and only if $Z_A=P$
identically: equality on an interval extends to the whole complex
plane by analyticity.
The least global dilation also depends on the behavior at bounded
parameter values.
\end{remark}

\section{Wiener area and covariance entropy}

Let $W=(W_t)_{0\le t\le1}$ be an $H$-valued Wiener process with
$W_0=0$ and covariance $Q\ge0$, $Q\in\Sp_1$, on a filtered probability
space $(\Omega,\mathcal F,(\mathcal F_t)_{0\le t\le1},\Pbb)$ satisfying
the usual conditions. Write $(q_i)$ for the positive eigenvalues of $Q$
and $S=\tr Q=\sum_iq_i$. Its skew area on $[0,1]$ is
\begin{equation}\label{eq:area}
 A=\frac12\int_0^1(W_s\otimes dW_s-dW_s\otimes W_s).
\end{equation}
The first integral in \eqref{eq:area} is an $\Sp_2$-valued It\^o integral.
Indeed, the It\^o isometry gives
\[
 \E\left\|\int_0^1 W_s\otimes dW_s\right\|_2^2
 =\int_0^1\E\|W_s\|^2\,\tr Q\,ds
 =\frac{(\tr Q)^2}{2}<\infty.
\]
The second integral is the adjoint of the first, since taking adjoints
is a bounded real-linear isometry on $\Sp_2$. Thus $A$ is a skew-adjoint
Hilbert--Schmidt operator. The It\^o--Stratonovich correction is symmetric
and cancels in the antisymmetric difference, so either convention gives
the same area. Standard Hilbert-space stochastic integration is treated
in \cite{Metivier1982}.

\begin{corollary}[Wiener area]\label{cor:area}
For each fixed trace-class covariance $Q$, there is a single event of probability one on which $A\in\Sp_p$ for every $p>1$ and, simultaneously for every $0<p\le1$,
\begin{equation}\label{eq:classification}
 A\in\Sp_p\quad\Longleftrightarrow\quad
 \begin{cases}
  \displaystyle\sum_iq_i^p<\infty,&0<p<1,\\[3pt]
  \displaystyle\sum_iq_i\log(S/q_i)<\infty,&p=1
 \end{cases}.
\end{equation}
For $Q=0$, the entropy sum is defined to be zero. The mean determinant is
\begin{equation}\label{eq:D}
 D_Q(\lambda):=\E Z_A(\lambda)
 =\frac{\det(I+\lambda Q/2)+\det(I-\lambda Q/2)}2,
 \qquad\lambda\ge0.
\end{equation}
Choose an ordered orthonormal eigenbasis $(e_i)$ of
$\overline{\operatorname{ran}Q}$, with $Qe_i=q_ie_i$, in any ordering
of the positive eigenvalues. Let $\operatorname{sgn}$ denote the sign
function, with $\operatorname{sgn}(0)=0$. Define $T_Q$ by its matrix
entries $(T_Q)_{ij}:=\langle T_Qe_j,e_i\rangle$:
\begin{equation}\label{eq:model}
 (T_Q)_{ij}=\frac12\sqrt{q_iq_j}\operatorname{sgn}(j-i).
\end{equation}
Extend $T_Q$ by zero on $\ker Q$. This gives a skew-adjoint Hilbert--Schmidt operator. Then $Z_{T_Q}=D_Q$, and the nonzero singular values of $T_Q$, including multiplicities, are independent of the chosen ordered eigenbasis. The dilation factor $L_*$ defined by \eqref{eq:dilation} with $P=D_Q$ satisfies \eqref{eq:weakone} and, on the same probability-one event,
\begin{equation}\label{eq:areacomparison}
 L_*^{-1}\|T_Q\|_p\le\|A\|_p\le L_*\|T_Q\|_p,
 \qquad 0<p<2.
\end{equation}
The deterministic operator also satisfies, with infinite values allowed,
\begin{equation}\label{eq:tracebounds}
 \max\left\{\sqrt{e_2(q)},\frac1\pi\sum_iq_i\log\frac S{q_i}\right\}
 \le\|T_Q\|_1
 \le\sqrt{e_2(q)}+\frac1\pi\sum_iq_i\log\frac S{q_i}.
\end{equation}
\end{corollary}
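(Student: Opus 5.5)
The plan is to realize the Wiener area as a Gaussian evaluation and apply Theorem~\ref{thm:main}. The bulk of the work then goes into the deterministic operator $T_Q$ and into scalar estimates for $D_Q$.

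\emph{Reduction to Theorem~\ref{thm:main}.} Take $K=L^2[0,1]$ and $Gf=\int_0^1 f\,dW$. The It\^o isometry for $W$ gives \eqref{eq:covariance}. Let $\eta\in\Lambda^2K$ be the two-form of the skew kernel $\tfrac12\operatorname{sgn}(t-s)$, that is, of the skew part of the Volterra operator. First I check on step functions that the Gaussian image of $\eta$ is the two-form of $A$ in \eqref{eq:area}: both sides are built from $\sum_{a<b}\Delta W_a\wedge\Delta W_b$ plus diagonal pieces that cancel by antisymmetry. Then I pass to the limit using \eqref{eq:secondA} and the $\Sp_2$-It\^o isometry. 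Uniqueness of the continuous extension identifies the two objects. Next, the skew Volterra kernel has eigenvalue pairs $\pm i/((2n+1)\pi)$, so \eqref{eq:det} gives
\[
 Z_\eta(\lambda)=\prod_n\Bigl(1+\tfrac{\lambda^2}{(2n+1)^2\pi^2}\Bigr)=\cosh(\lambda/2),
 \qquad\text{hence}\qquad \|\eta^{\wedge k}/k!\|^2=\frac1{4^k(2k)!}.
\]
Substituting into \eqref{eq:mean} gives $D_Q(\lambda)=\sum_k e_{2k}(q)(\lambda/2)^{2k}$. This is the even part of $\det(I+\lambda Q/2)$, which is \eqref{eq:D}. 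The bound \eqref{eq:weakone} and the single almost-sure event are inherited from the theorem.

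\emph{The model operator.} For finitely many eigenvalues, $\det(I+\lambda T_Q)$ is the sum of principal minors. Odd minors vanish by skewness. The minor indexed by $I$ with $|I|=2k$ equals $4^{-k}q_I\operatorname{Pf}(\operatorname{sgn}(j-i))^2$. The Pfaffian of the $2k\times2k$ sign matrix is $1$, as one sees by induction via expansion along the first row. Hence $\det(I+\lambda T_Q)=D_Q(\lambda)$. Since $\tr T_Q=0$, this coincides with $\Det$. For infinitely many eigenvalues I truncate: the truncations converge to $T_Q$ in $\Sp_2$, because the Hilbert--Schmidt norm is bounded by $S/2$. Continuity of $\Det$ then gives $Z_{T_Q}=D_Q$. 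Since $Z_T$ determines the $\sigma_j$, the singular values do not depend on the ordering. By \eqref{eq:integral}, $N_p=\|T_Q\|_p$, so \eqref{eq:scale} becomes \eqref{eq:areacomparison}. Every membership statement for $A$ therefore reduces to the same statement for $T_Q$ on the event $\Omega_0$.

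\emph{Scalar criteria.} For $0<p<1$ I use $\max\{1,\tfrac12\det(I+\lambda Q/2)\}\le D_Q\le\det(I+\lambda Q/2)$. The identity $\int_0^\infty\log(1+\lambda s)\lambda^{-p-1}d\lambda=c_ps^p$, valid for $p<1$, gives the upper bound $\|T_Q\|_p^p\le C_p\sum q_i^p$. For the lower bound, the integrand $(\log\det(I+\lambda Q/2)-\log2)_+$ still has infinite integral over large $\lambda$ whenever $\sum q_i^p=\infty$. For $1<p<2$ I use $\log D_Q\le\min\{\lambda^2e_2(q)/2\cdot c,\ S\lambda/2\}$, which is integrable against $\lambda^{-p-1}$; this settles membership for $p>1$ and, together with the previous case, for $p<1$.

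\emph{The endpoint (main obstacle).} The case $p=1$ is \eqref{eq:tracebounds}. Here $\int\log(1+\lambda q/2)\lambda^{-2}$ diverges logarithmically at infinity, so the cancellation in the even part is essential. I write
\[
 D_Q=\tfrac12\prod_i(1+x_i)\Bigl(1+\prod_i\tfrac{1-x_i}{1+x_i}\Bigr),\qquad x_i=\lambda q_i/2.
\]
I then split the $\lambda$-integral at the scale $\lambda\sim 1/S$. Below that scale, $\log D_Q\approx\lambda^2e_2(q)/2$, which produces the $\sqrt{e_2(q)}$ term. Above it, $\log D_Q=\sum_i\log(1+x_i)-\log 2+O(\text{small})$. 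The main work is to show that, after weighting by $\lambda^{-2}$ and the prefactor $2/\pi$, the large-$\lambda$ contribution equals $\frac1\pi\sum_iq_i\log(S/q_i)$ up to errors absorbed by $\sqrt{e_2(q)}$. The sharp form of this claim, with constant $1$ in front of both terms, is the step I expect to need the most care. I would first try the exact formula $\int_{a}^{\infty}\log(1+\lambda q/2)\lambda^{-2}d\lambda$, evaluated at $a=c/S$, together with monotonicity of the correction factor in $\lambda$.
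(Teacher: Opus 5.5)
\textbf{Verdict: there is a genuine gap at the endpoint $p=1$.}

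\textbf{What is correct.} The following parts of your proposal are sound:
the reduction to Theorem~\ref{thm:main};
the computation $Z_J(\lambda)=\cosh(\lambda/2)$ from the explicit spectrum of the skew kernel;
the Pfaffian evaluation of principal minors giving $Z_{T_Q}=D_Q$;
for $0<p<1$, the sandwich $\max\{1,\tfrac12\det(I+\lambda Q/2)\}\le D_Q(\lambda)\le\det(I+\lambda Q/2)$;
for $1<p<2$, the bound $\log D_Q\le\min\{\lambda^2e_2(q)/4,\lambda S/2\}$.
Together these are a legitimate alternative to the paper, which extracts all of these facts from a single rank-one identity.

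\textbf{The gap at $p=1$.} You reduce the case $p=1$ to \eqref{eq:tracebounds}, do not prove \eqref{eq:tracebounds}, and the sketch you give cannot produce it, for three reasons.

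First, the divergence of $\int\log(1+\lambda q/2)\lambda^{-2}\,d\lambda$ occurs at $\lambda=0$, where the integrand is about $q/(2\lambda)$, not at infinity. The even part matters because it removes the linear term at the origin.

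Second, your claim that $\log D_Q=\sum_i\log(1+x_i)-\log2+O(\text{small})$ once $\lambda\gtrsim1/S$ is false. For rank one, $D_Q\equiv1$. For finite even rank, the correction $\log\bigl(1+\prod_i\frac{1-x_i}{1+x_i}\bigr)$ tends to $\log2$; for finite odd rank it tends to $-\infty$. If $q_1=S-\varepsilon$ with small remaining mass, the product stays close to $-1$ throughout $1/S\ll\lambda\ll1/\varepsilon$.

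Third, any split at a fixed scale leaves additive errors of order $S$. These cannot be absorbed into $\sqrt{e_2(q)}$, which is about $\sqrt{S\varepsilon}$ in the nearly rank-one case. For rank one, all three quantities in \eqref{eq:tracebounds} vanish.

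Carried out carefully, your crude bounds give $\|T_Q\|_1=\frac1\pi\sum_iq_i\log(S/q_i)+O(S)$. That would suffice for the qualitative $p=1$ criterion, which you did not carry out, but not for \eqref{eq:tracebounds} with constant one.

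\textbf{The missing idea.} What is needed is an exact renormalization that subtracts $\log(1+\lambda S)$ rather than $\log2$. Put $B=2T_Q$, $v=\sum_i\sqrt{q_i}\,e_i$ and $b=e_2(q)$. Then $B+v\otimes v$ is upper triangular with diagonal $(q_i)$. The matrix determinant lemma, followed by truncation, gives
\[
 \det(I+\lambda Q)=Z_B(\lambda)\bigl[1+\lambda a(\lambda)\bigr],
 \qquad a(\lambda)=\langle v,(I-\lambda^2B^2)^{-1}v\rangle\in\bigl[S/(1+b\lambda^2),\,S\bigr].
\]
With $R_Q(\lambda)=\log\det(I+\lambda Q)-\log(1+\lambda S)$, this yields the exact sandwich
\[
 \max\{R_Q(\lambda),\log(1+b\lambda^2)\}\le\log D_Q(2\lambda)\le R_Q(\lambda)+\log(1+b\lambda^2).
\]
Moreover $R_Q=\sum_ih_i$ with $h_i(\lambda)=\log(1+\lambda q_i)-\frac{q_i}{S}\log(1+\lambda S)\ge0$, and $\int_0^\infty h_i(\lambda)\lambda^{-2}\,d\lambda=q_i\log(S/q_i)$ exactly. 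We also have $\int_0^\infty\log(1+b\lambda^2)\lambda^{-2}\,d\lambda=\pi\sqrt b$ and $\|T_Q\|_1=\frac1\pi\int_0^\infty\log D_Q(2\lambda)\lambda^{-2}\,d\lambda$. Integrating the sandwich against $\lambda^{-2}$ then gives \eqref{eq:tracebounds} with constant one, and with it the $p=1$ criterion.
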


\begin{proof}
The case $Q=0$ is immediate; assume henceforth that $S=\tr Q>0$.

\emph{Step 1. A rank-one determinant identity.}
Work on $H_Q=\overline{\operatorname{ran}Q}$, equipped with the
ordered eigenbasis fixed above. Both $Q$ and $T_Q$ vanish on $\ker Q$,
so this restriction does not change their determinants. Set
\[
 B=2T_Q,\qquad v=\sum_i\sqrt{q_i}\,e_i,\qquad b=e_2(q).
\]
Then
\[
 B^*=-B,\qquad \|v\|^2=S,\qquad
 \|B\|_2^2=\sum_{i\ne j}q_iq_j=2b.
\]
The rank-one perturbation $B+v\otimes v$ is upper triangular with
diagonal entries $q_i$: adding $v\otimes v$ cancels each entry of
$B$ below the diagonal. After complexification,
$\mathrm i(B+v\otimes v)$ has the form of Liv\v{s}ic's triangular
model for operators with rank-one imaginary part; see
\cite[Section~2.3]{ArlinskiiTsekanovskii2006}.
The determinant identity below will determine both the exterior
coefficients of the time form and the trace-class criterion.
For $\lambda\in\mathbb R$, define
\[
 a(\lambda)=\langle v,(I-\lambda^2B^2)^{-1}v\rangle.
\]
This is well defined because
$I-\lambda^2B^2=I+\lambda^2B^*B\ge I$.
We claim that
\begin{equation}\label{eq:rankone}
 \det(I+\lambda Q)=Z_B(\lambda)\bigl[1+\lambda a(\lambda)\bigr],
 \qquad\lambda\in\mathbb R.
\end{equation}
Suppose first that $Q$ has finite rank. By the preceding
triangularity and the matrix determinant lemma,
\begin{align*}
 \det(I+\lambda Q)
 &=\det\bigl(I+\lambda(B+v\otimes v)\bigr)\\
 &=\det(I+\lambda B)
       \bigl[1+\lambda\langle v,(I+\lambda B)^{-1}v\rangle\bigr].
\end{align*}
For real $\lambda$, the inverse exists because $B^*=-B$, and
\[
 (I+\lambda B)^{-1}=(I-\lambda B)(I-\lambda^2B^2)^{-1}.
\]
The resolvent $(I-\lambda^2B^2)^{-1}$ is self-adjoint and commutes
with $B$. Thus $B(I-\lambda^2B^2)^{-1}$ is skew-adjoint and has zero
quadratic form on real vectors. In this finite-dimensional calculation,
$\tr B=0$, so $\det(I+\lambda B)=\Det(I+\lambda B)$.
These identities prove \eqref{eq:rankone} when $Q$ has finite rank.

Now suppose that $Q$ has infinite rank. Let $\Pi_N$ be the orthogonal
projection onto the first $N$ covariance coordinates and
put $Q_N=\Pi_NQ\Pi_N$, $B_N=\Pi_NB\Pi_N$, and $v_N=\Pi_Nv$.
Writing $S_N=\sum_{i\le N}q_i$ and $b_N=e_2(q_1,\ldots,q_N)$, one has
\[
 \|Q-Q_N\|_1=\|v-v_N\|^2=S-S_N\longrightarrow0,
 \qquad
 \|B-B_N\|_2^2=2(b-b_N)\longrightarrow0.
\]
In particular, $B_N^2\to B^2$ in operator norm. For real $\lambda$, both
$(I-\lambda^2B_N^2)^{-1}$ and $(I-\lambda^2B^2)^{-1}$ are positive contractions.
For every fixed $M>0$, their resolvent identity gives, uniformly for $|\lambda|\le M$,
\[
 \bigl\|(I-\lambda^2B_N^2)^{-1}-(I-\lambda^2B^2)^{-1}\bigr\|_{\rm op}
 \le M^2\|B_N^2-B^2\|_{\rm op}\longrightarrow0.
\]
The ordinary Fredholm determinants converge by trace-norm continuity,
and the regularized determinants by Hilbert--Schmidt continuity
\cite{Simon2005}. Together with $v_N\to v$, these convergences allow passage to the
limit in the finite-rank identity, proving \eqref{eq:rankone} for every
real $\lambda$.

Both $Z_B$ and $a$ are even, so taking even parts yields
\begin{equation}\label{eq:even}
 Z_B(\lambda)=\frac{\det(I+\lambda Q)+\det(I-\lambda Q)}2.
\end{equation}
Both sides of \eqref{eq:even} are entire, so the identity extends to every complex parameter.

\emph{Step 2. The exterior coefficients of the time form.}
Let $K=L^2(0,1)$ and define the integral operator $J$ by
\[
 (Jf)(s)=\int_0^1j_0(s,t)f(t)\,dt,\qquad
 j_0(s,t)=\frac12\operatorname{sgn}(t-s),\qquad f\in K.
\]
The kernel $j_0$ is square integrable and antisymmetric, so $J$ is
skew-adjoint and Hilbert--Schmidt. Let $\eta_0\in\Lambda^2K$ be the
corresponding two-form. Under the identification
$(f\otimes g)(s,t)=f(s)g(t)$ of $K\otimes K$ with $L^2((0,1)^2)$,
the normalized alternating embedding satisfies
$\widehat\eta_0=j_0/\sqrt2$.
For each integer $n\ge1$ and $1\le i\le n$, set
$I_{n,i}=((i-1)/n,i/n]$. Let $P_n$ be the orthogonal projection
of $L^2(0,1)$ onto the span of the normalized indicators
$f_{n,i}=\sqrt n\,\mathbf1_{I_{n,i}}$, $1\le i\le n$.
Put $J_n=P_nJP_n$, regarded as an operator on $L^2(0,1)$ which vanishes
on the orthogonal complement of this span. Its matrix is
\[
 \langle J_nf_{n,j},f_{n,i}\rangle
 =\frac{\operatorname{sgn}(j-i)}{2n}.
\]
The kernel of $J_n$ agrees with $j_0$ off the $n$ diagonal partition
squares and is zero on those squares. Hence
\[
 \|J-J_n\|_2^2
 =\sum_{i=1}^n\int_{I_{n,i}\times I_{n,i}}|j_0(s,t)|^2\,ds\,dt
 =\frac1{4n}.
\]
Apply \eqref{eq:even} in dimension $n$ with all covariance eigenvalues
equal to $1/(2n)$. The matrix $B$ in that identity has entries
$\operatorname{sgn}(j-i)/(2n)$, so it is precisely the matrix of
$J_n$ on $\operatorname{ran}P_n$. Hence
\[
 Z_{J_n}(z)
 =\frac{(1+z/(2n))^n+(1-z/(2n))^n}{2}
 \longrightarrow\cosh(z/2)
\]
locally uniformly for $z\in\mathbb C$.
Hilbert--Schmidt convergence and continuity of the regularized
determinant identify the limit as $Z_J(z)$.
Comparing coefficients in \eqref{eq:det} gives
\begin{equation}\label{eq:timecoeff}
 (2k)!\left\|\frac{\eta_0^{\wedge k}}{k!}\right\|^2=4^{-k},\qquad k\ge0.
\end{equation}

\emph{Step 3. Identification with Wiener area.}
For each $i$, define
$\beta_i(t)=q_i^{-1/2}\langle W_t,e_i\rangle$, $0\le t\le1$.
These are independent standard Brownian motions, and
\[
 W_t=\sum_i\sqrt{q_i}\,\beta_i(t)e_i
 \quad\text{in }L^2(\Omega;H),\qquad 0\le t\le1.
\]
For $f\in K=L^2(0,1)$, define
\[
 \xi_i(f)=\int_0^1f(s)\,d\beta_i(s),\qquad
 Gf=\sum_i\sqrt{q_i}\,\xi_i(f)e_i.
\]
The series defining $Gf$ converges in $L^2(\Omega;H)$, since its second
moment is $S\|f\|^2$. The map $G$ satisfies \eqref{eq:covariance}, and
$W_t=G\mathbf1_{[0,t]}$ in $L^2(\Omega;H)$ for every $t\in[0,1]$.

For $i\ne j$, define the mixed Gaussian integral $\mathcal I_{ij}$ first
on simple tensors $f\otimes g$, with $f,g\in K$, by
\[
 \mathcal I_{ij}(f\otimes g)=\xi_i(f)\xi_j(g).
\]
Independence and the isonormal isometry imply that $\mathcal I_{ij}$
extends uniquely to a linear isometry
\[
 \mathcal I_{ij}:L^2((0,1)^2)\longrightarrow L^2(\Omega).
\]
For every $u\in\Lambda^2K$,
\[
 \iota_{\xi_j}\iota_{\xi_i}u
 =\sqrt2\,\mathcal I_{ij}(\widehat u).
\]
This identity follows first for simple wedges and then by continuity.
Under the tensor convention fixed above, the off-diagonal entries of
\eqref{eq:area} are
\begin{align*}
 \langle Ae_j,e_i\rangle
 &=\frac{\sqrt{q_iq_j}}2
   \left(\int_0^1\beta_i(t)\,d\beta_j(t)
         -\int_0^1\beta_j(t)\,d\beta_i(t)\right)\\
 &=\sqrt{q_iq_j}\,
   \mathcal I_{ij}\!\left(\frac{\mathbf1_{\{s<t\}}-
                                      \mathbf1_{\{t<s\}}}{2}\right)
 =\sqrt{q_iq_j}\,\mathcal I_{ij}(j_0).
\end{align*}
The equality follows first for step-process approximations and then
by the It\^o isometry. Since $\sqrt2\,\widehat\eta_0=j_0$, these
entries are precisely those of the Gaussian evaluation of $\eta_0$
in \eqref{eq:powers}. The diagonal entries vanish, and both operators
vanish on $\ker Q$. Moreover,
\[
 \sum_{i\ne j}\E|\langle Ae_j,e_i\rangle|^2
 =\frac14\sum_{i\ne j}q_iq_j
 =\frac b2<\infty.
\]
It follows that the Gaussian evaluation of $\eta_0$ under $G$
agrees with the Wiener area $A$ in $L^2(\Omega;\Sp_2)$. For the double Wiener--It\^o approach to Gaussian area,
see also \cite{FerreiroUtzet2011}.

Equations \eqref{eq:mean}, \eqref{eq:timecoeff}, and the even Fredholm expansion now give
\[
 \E Z_A(\lambda)=\sum_{k\ge0}4^{-k}e_{2k}(q)\lambda^{2k}=Z_B(\lambda/2).
\]
This proves \eqref{eq:D} and $Z_{T_Q}=D_Q$.
By \eqref{eq:det}, the zeros of this determinant determine the nonzero
singular values with their multiplicities. Since $D_Q$ depends only on
the unordered covariance spectrum, these singular values are independent
of the chosen ordered eigenbasis.
By \eqref{eq:integral}, the identity $Z_{T_Q}=D_Q$ gives
$N_p=\|T_Q\|_p$ for every $0<p<2$, with infinite values allowed.
Intersect the event on which the Gaussian evaluation and the Wiener
area agree with the event in Theorem~\ref{thm:main}.
On this common event, \eqref{eq:scale} gives \eqref{eq:areacomparison}
simultaneously for every $0<p<2$. The same dilation factor satisfies
the tail bound \eqref{eq:weakone}. We work on this event for the remainder
of the proof.

\emph{Step 4. Schatten membership and the trace-class endpoint.}
It remains to determine the Schatten classes containing $T_Q$ and
to prove \eqref{eq:tracebounds}.
The nonzero singular values of $B$
occur in equal pairs, so
$\|B\|_{\mathrm{op}}^2\le\|B\|_2^2/2=b$.
Consequently,
\[
 \frac{S}{1+b\lambda^2}\le a(\lambda)\le S,\qquad \lambda\ge0.
\]
For $\lambda\ge0$, define
\[
 R_Q(\lambda)=\log\det(I+\lambda Q)-\log(1+\lambda S).
\]
The Fredholm product formula shows that both logarithms
have derivative $S$ at zero.
Equation \eqref{eq:rankone} gives
\[
 \log Z_B(\lambda)-R_Q(\lambda)=\log\frac{1+\lambda S}{1+\lambda a(\lambda)}.
\]
The bounds on $a(\lambda)$ imply
\[
 1\le
 \frac{1+\lambda S}{1+\lambda a(\lambda)}
 \le
 \frac{(1+b\lambda^2)(1+\lambda S)}
      {1+b\lambda^2+\lambda S}
 \le1+b\lambda^2.
\]
Moreover, $Z_B(\lambda)\ge1+b\lambda^2$, since its coefficients are
nonnegative and the coefficient of $\lambda^2$ is $b$.
Therefore
\begin{equation}\label{eq:tracecancel}
 \max\{R_Q(\lambda),\log(1+b\lambda^2)\}
 \le\log D_Q(2\lambda)
 \le R_Q(\lambda)+\log(1+b\lambda^2).
\end{equation}
The function $\log(1+b\lambda^2)$ is integrable against
$\lambda^{-p-1}\,d\lambda$ for every $0<p<2$.

For each $i$ and $\lambda\ge0$, set
\[
 h_i(\lambda)=\log(1+\lambda q_i)-\frac{q_i}{S}\log(1+\lambda S).
\]
The Fredholm product formula and $\sum_iq_i=S$ give
$R_Q(\lambda)=\sum_i h_i(\lambda)$.
Each $h_i$ is nonnegative, since $h_i(0)=0$ and
\[
 h_i'(\lambda)=\frac{q_i(S-q_i)\lambda}{(1+\lambda q_i)(1+\lambda S)}\ge0.
\]
Fix $0<p<2$, and let $\Gamma$ be Euler's gamma function,
$\Gamma(x)=\int_0^\infty u^{x-1}e^{-u}\,du$ for $x>0$.
If $q_i=S$, then $h_i=0$. Otherwise $0<q_i<S$.
Since $h_i(\lambda)=\int_0^\lambda h_i'(t)\,dt$, Tonelli's theorem
and the identity
\[
 \frac{S-q_i}{(1+q_i\lambda)(1+S\lambda)}
 =\int_{q_i}^S\frac{du}{(1+u\lambda)^2}
\]
give
\begin{align*}
 \int_0^\infty h_i(\lambda)\,\frac{d\lambda}{\lambda^{p+1}}
 &=\frac1p\int_0^\infty h_i'(\lambda)\lambda^{-p}\,d\lambda\\
 &=\frac{q_i}p\int_{q_i}^S\int_0^\infty
       \frac{\lambda^{1-p}}{(1+u\lambda)^2}\,d\lambda\,du\\
 &=\frac{q_i\,\Gamma(p)\Gamma(2-p)}p
       \int_{q_i}^S u^{p-2}\,du.
\end{align*}
The inner integral equals
$u^{p-2}\Gamma(2-p)\Gamma(p)$ by Euler's beta integral.
For $p\ne1$, use
$\Gamma(p)\Gamma(2-p)=(1-p)\pi/\sin(\pi p)$; at $p=1$,
the last integral equals $\log(S/q_i)$. We obtain
\begin{equation}\label{eq:scalarentropy}
 \int_0^\infty h_i(\lambda)\,\frac{d\lambda}{\lambda^{p+1}}
 =\begin{cases}
 \displaystyle\frac{\pi}{p\sin(\pi p)}
       (q_i^p-q_iS^{p-1}),&p\ne1,\\[5pt]
 \displaystyle q_i\log(S/q_i),&p=1.
 \end{cases}
\end{equation}
Since $h_i\ge0$, Tonelli permits summation without any prior
finiteness assumption:
\begin{equation}\label{eq:entropy}
 \int_0^\infty R_Q(\lambda)\,\frac{d\lambda}{\lambda^{p+1}}
 =\begin{cases}
 \displaystyle\frac{\pi}{p\sin(\pi p)}
       \left(\sum_iq_i^p-S^p\right),&p\ne1,\\[5pt]
 \displaystyle\sum_iq_i\log(S/q_i),&p=1,
 \end{cases}
 \qquad 0<p<2.
\end{equation}
For $0<p<1$, this integral is finite exactly when
$\sum_iq_i^p<\infty$. At $p=1$, it is the covariance entropy.
For $1<p<2$, it is always finite, since
$\sum_iq_i^p\le S^p$. Equations \eqref{eq:tracecancel} and
\eqref{eq:integral} therefore give the corresponding membership
criteria for $T_Q$. The comparison \eqref{eq:areacomparison} proves
\eqref{eq:classification} and $A\in\Sp_p$ for $1<p<2$ on the same event.
Hilbert--Schmidt membership gives $A\in\Sp_p$ for every $p\ge2$.

At $p=1$, \eqref{eq:integral} and a change of variables give
\[
 \|T_Q\|_1=\frac1\pi\int_0^\infty\log D_Q(2\lambda)\,\frac{d\lambda}{\lambda^2},
 \qquad
 \int_0^\infty\log(1+b\lambda^2)\,\frac{d\lambda}{\lambda^2}=\pi\sqrt b.
\]
Integrating \eqref{eq:tracecancel} and using \eqref{eq:entropy} proves \eqref{eq:tracebounds}.
\end{proof}

\begin{remark}[Schatten orders greater than one]
The conclusion for $p>1$ also follows from the exactness criterion
for tensor norms of Ledoux, Lyons and Qian
\cite[Definition~1 and Theorem~3]{LedouxLyonsQian2002}.
Let $H$ be a Hilbert space, let $Q\ge0$ be trace class on $H$,
and let $G_k,G'_k$, $k\ge1$, be mutually independent centered
$H$-valued Gaussian random variables with covariance $Q$.
For every integer $N\ge1$,
\[
 \E\left\|\sum_{k=1}^N G_k\otimes G'_k\right\|_p
 \le N^{\max\{1/p,1/2\}}\tr Q,
 \qquad 1<p<\infty.
\]
For $1<p<2$, this follows by interpolation between the
trace-norm bound $N\tr Q$ and the Hilbert--Schmidt bound
$N^{1/2}\tr Q$. For $p\ge2$, the Hilbert--Schmidt bound
suffices. In every case the exponent of $N$ is strictly less
than one, as required by the exactness criterion.
\end{remark}

For $S>0$, set $p_i=q_i/S$. The entropy in
\eqref{eq:classification} then equals $S\sum_i p_i\log(1/p_i)$. Thus, for each fixed covariance, the area is trace class almost surely if and only if the normalized covariance spectrum has finite entropy. The Hilbert--Schmidt norm satisfies
\[
 \|T_Q\|_2^2=\E\|A\|_2^2=\frac12e_2(q),
 \qquad \sqrt{e_2(q)}=\sqrt2\,\|T_Q\|_2.
\]
If $Q$ has rank one, then $e_2(q)=0$ and $A=0$. If $Q$ has rank two, then $\|T_Q\|_1=\sqrt{q_1q_2}$.

\begin{remark}[Covariance spectra]
For positive sequences $(a_n)$ and $(b_n)$, write $a_n\asymp b_n$ if
there are constants $0<c\le C<\infty$ such that $cb_n\le a_n\le Cb_n$
for all sufficiently large $n$. Write the positive covariance eigenvalues
as $(q_n)$. For each fixed covariance considered below, all membership
statements hold on the single probability-one event of
Corollary~\ref{cor:area}.

\textup{(i)} Suppose that
\[
 q_n\asymp n^{-\beta}\qquad\text{for some }\beta>1.
\]
Then $q_n\log(S/q_n)\asymp n^{-\beta}\log n$, so the covariance entropy
is finite. Moreover, $\sum_nq_n^p<\infty$ exactly when $\beta p>1$.
Consequently,
\[
 A\in\Sp_p\quad\Longleftrightarrow\quad p>\frac1\beta,
 \qquad p>0.
\]
In particular, $A\notin\Sp_{1/\beta}$.

\textup{(ii)} Suppose that
\[
 q_n\asymp\frac1{n(\log n)^\alpha},\qquad n\ge2,\quad\alpha>1.
\]
The covariance is trace class, while
$q_n\log(S/q_n)\asymp[n(\log n)^{\alpha-1}]^{-1}$. Thus
\[
 A\in\Sp_1\quad\Longleftrightarrow\quad\alpha>2.
\]
For every $\alpha>1$, the area belongs to $\Sp_p$ for all $p>1$ and
to none of the classes $\Sp_p$ with $0<p<1$, since
$\sum_nq_n^p$ diverges for every $0<p<1$.
\end{remark}

\section*{Declaration of AI use}
The author used ChatGPT (GPT-6.0; OpenAI) interactively in developing proofs and refining the presentation. The author has independently verified all proofs and takes full responsibility for the content of the manuscript.

\end{document}